\renewcommand{\i}{\mathrm{i}}
\newcommand{\xCrit}{\tilde x^{(c)}}
\newcommand{\alphCrit}{\alpha^{(c)}}
\newcommand{\x}{x}
\newcommand{\f}{f}
\renewcommand{\i}{\mathrm{i}}
\renewcommand{\r}{r}
\newcommand{\B}{B}
\newcommand{\I}{I}
\newcommand{\balpha}{\alpha}
\newcommand{\R}{\mathbb{R}}
\newcommand{\ba}{a}
\newcommand{\bb}{b}
\newcommand{\xE}{\tilde x}
\newcommand{\bkappa}{\kappa}
\newcommand{\expsin}{\mathrm{s}}
\newcommand{\expcos}{\mathrm{c}}
\newtheorem{lemma}{Lemma}
\newtheorem{proposition}{Proposition}
\title{Normal Vectors on Modified Hopf Manifolds of Delay Differential Equations}
\author{Jonas Otten}
\author{Martin M\"onnigmann\thanks{Corresponding author. Email: martin.moennigmann@rub.de}}
\affil{{\small{\em Automatic Control and Systems Theory, Ruhr-Universit\"at Bochum, Bochum, Germany}}}
\date{September 15, 2016}
\begin{document}

\maketitle
\begin{abstract}
This document states the normal vector system for modified Hopf boundaries of delay differential systems with state and parameter dependent delays. Specifically, it states the proof for Proposition~1 in the paper entitled \textit{Robust optimization of delay differential equations with state
      and parameter dependent delays} by the same authors~\cite{Otten2016b}. 
\end{abstract}

\section{Introduction}
\subsection{System Class}

We consider continuous time systems with uncertain and state dependent delays
\begin{equation}\label{eq:sys}
  \dot \x(t)=\f(\x(t),\x(t-\tau_1),\dots,\x(t-\tau_m),\balpha)
\end{equation}
with state vector $\x \in \R^n$, uncertain parameters $\balpha \in \R^{n_\alpha}$, $m$ delays $\tau_i$ and a smooth $\f$ that maps from $\R^{n(m+1)}\times\R^{n_\alpha}$, or an open subset thereof, into $\R^n$. 
The delays may be functions of the current state and uncertain parameters,
\begin{equation}\label{eq:delay}
\tau_i=\tau_i(\x(t),\alpha)\,.
\end{equation}

\subsection{Preliminaries}

The solution to a set of nonlinear equations will be called regular, if the Jacobian of the equations evaluated at this solution has full rank. 

We introduce the following abbreviations. 
\begin{subequations}
\begin{align}
\expsin(\sigma,\omega,\tau)&=\exp(-\sigma\tau)\sin(\omega\tau)\nonumber\\
\expcos(\sigma,\omega,\tau)&=\exp(-\sigma\tau)\cos(\omega\tau)\,. \nonumber
\end{align}
\end{subequations}

Furthermore, we introduce the delay $\tau_0=0$ to simplify the notation. This permits to replace $x(t)$ by $x(t-\tau_0)$ and state expressions for $x(t-\tau_i)$, $i= 0, \dots, m$ instead of for $x(t)$ and for $x(t-t_i)$, $i= 1, \dots, m$ separately.  

The Jacobian of the right hand side of \eqref{eq:sys} w.r.t $\x(t-\tau_i)$ are denoted by $A_i$. All Jacobians are assumed to be evaluated at a steady state if not noted otherwise. We denote such a steady state $\xE$. 

We refer to $\lambda \in \mathbb{C}$ as an eigenvalue at a steady state $\xE$ of \eqref{eq:sys}, if
\begin{equation}\label{eq:det}
\mathrm{det}\left(\lambda \I - A_0 -\sum_{i=1}^mA_i\exp(-\lambda \tau_i)\right)=0
\end{equation}
(cf. \cite{Engelborghs2000}).

By \textit{modified Hopf point} we refer to 
a steady state that has a leading complex conjugate pair of eigenvalues with nonzero real part. A manifold of such points is called modified Hopf manifold.

\section{Augmented system of Modified Hopf Manifold}

\begin{lemma}[augmented system modified Hopf~\cite{Otten2016b}]
Assume $\xCrit$ is a steady state for parameter values $\alphCrit$. 
If $\lambda=\sigma\pm\i\omega$ are the leading eigenvalues at the steady state $\xCrit$, then there exist vectors $\ba,\bb \in \mathbb{R}^n$ such that $(\xCrit,\alphCrit)$, obey the equations
\begin{subequations}\label{eq:ExpStabManifold}
\begin{align}
\f(\xCrit,\xCrit,...,\xCrit,\alphCrit)&=0\label{eq:ExpStabManifoldSteady}\\
\sigma\ba-\omega\bb-\sum_{i=0}^m A_i\big(\expcos(\sigma,\omega,\tau_i)\ba+\expsin(\sigma,\omega,\tau_i)\bb\big)&=0\label{eq:ExpStabManifoldReal}\\
\omega\ba+\sigma\bb-\sum_{i=0}^m A_i\big(\expcos(\sigma,\omega,\tau_i)\bb-\expsin(\sigma,\omega,\tau_i)\ba\big)&=0\label{eq:ExpStabManifoldImag}\\
\ba^\prime\ba+\bb^\prime\bb -1&=0\label{eq:ExpStabManifoldLength}\\
\ba^\prime\bb&=0\label{eq:ExpStabManifoldPhase}\,.
\end{align}
\end{subequations}
\end{lemma}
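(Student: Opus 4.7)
The plan is to decompose the system of equations into its sources. Equation~\eqref{eq:ExpStabManifoldSteady} is immediate from the hypothesis that $\xCrit$ is a steady state at $\alphCrit$. Equations~\eqref{eq:ExpStabManifoldReal} and~\eqref{eq:ExpStabManifoldImag} will be obtained by writing the characteristic equation~\eqref{eq:det} as an eigenvector equation for a complex vector $\vvec = \ba + \i\bb$ and separating real and imaginary parts. Equations~\eqref{eq:ExpStabManifoldLength} and~\eqref{eq:ExpStabManifoldPhase} will be secured by exploiting the complex scaling freedom of the eigenvector.

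For the real/imaginary decomposition, I would first note that the determinantal condition~\eqref{eq:det} with $\lambda = \sigma + \i\omega$ guarantees a non-trivial complex vector $\vvec \in \C^n$, which I write as $\vvec = \ba + \i\bb$ with $\ba,\bb \in \R^n$, such that
\begin{equation*}
  (\sigma + \i\omega)\vvec \;-\; \sum_{i=0}^m A_i \exp(-(\sigma+\i\omega)\tau_i)\,\vvec \;=\; 0.
\end{equation*}
Using $\exp(-(\sigma+\i\omega)\tau_i) = \expcos(\sigma,\omega,\tau_i) - \i\,\expsin(\sigma,\omega,\tau_i)$ and multiplying out $(\sigma+\i\omega)(\ba+\i\bb)$ and $(\expcos - \i\,\expsin)(\ba+\i\bb)$, collecting real parts yields~\eqref{eq:ExpStabManifoldReal} and collecting imaginary parts yields~\eqref{eq:ExpStabManifoldImag}. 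This is a routine but somewhat tedious expansion.

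To obtain~\eqref{eq:ExpStabManifoldLength} and~\eqref{eq:ExpStabManifoldPhase}, I would exploit that if $\vvec$ is an eigenvector, so is $c\vvec$ for any nonzero $c \in \C$. Writing $c = \r\,\e^{\i\theta}$, the rotation by $\theta$ gives new real and imaginary parts
\begin{equation*}
  \tilde \ba = \cos(\theta)\,\ba - \sin(\theta)\,\bb, \qquad \tilde \bb = \sin(\theta)\,\ba + \cos(\theta)\,\bb,
\end{equation*}
and a short computation shows $\tilde \ba^\prime \tilde \bb = \tfrac{1}{2}\sin(2\theta)(\ba^\prime \ba - \bb^\prime \bb) + \cos(2\theta)\,\ba^\prime \bb$. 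This expression can always be made to vanish by appropriate choice of $\theta$ (taking $\theta = \pi/4$ in the degenerate case $\ba^\prime \ba = \bb^\prime \bb$), securing~\eqref{eq:ExpStabManifoldPhase}. Finally, since $\ba^\prime \ba + \bb^\prime \bb = \vvec^* \vvec > 0$ is invariant under the phase rotation, a subsequent real positive scaling $\r$ enforces~\eqref{eq:ExpStabManifoldLength} without affecting the phase condition.

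The main conceptual hurdle is the last step: the phase and length conditions must be compatible, i.e.\ one must not spoil the other. The observation above---that phase rotations preserve $\vvec^*\vvec$ while real scalings preserve $\ba^\prime \bb / (\ba^\prime \ba + \bb^\prime \bb)$---makes the two conditions effectively orthogonal and resolves this. Beyond this, the only delicate point is the degenerate case $\|\ba\| = \|\bb\|$ with $\ba^\prime \bb \neq 0$, which is handled by choosing $\cos(2\theta) = 0$.
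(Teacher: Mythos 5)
The paper states this lemma without proof --- it is imported verbatim from the companion paper \cite{Otten2016b} and the present document only proves the normal-vector proposition --- so there is no in-paper argument to compare against. Your derivation is correct and is the natural one: the characteristic condition \eqref{eq:det} with $\lambda=\sigma+\i\omega$ yields a nontrivial right null vector $\vvec=\ba+\i\bb$ of $\lambda \I-\sum_{i=0}^m A_i\exp(-\lambda\tau_i)$ (using the convention $\tau_0=0$), the identity $\exp(-\lambda\tau_i)=\expcos(\sigma,\omega,\tau_i)-\i\,\expsin(\sigma,\omega,\tau_i)$ gives \eqref{eq:ExpStabManifoldReal} and \eqref{eq:ExpStabManifoldImag} as real and imaginary parts, and the complex scaling freedom delivers \eqref{eq:ExpStabManifoldLength} and \eqref{eq:ExpStabManifoldPhase}. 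Your key observation --- that the phase rotation leaves $\ba^\prime\ba+\bb^\prime\bb$ invariant while the subsequent positive real scaling preserves $\ba^\prime\bb=0$, so the two normalizations decouple --- is exactly the point that needs to be made, and your handling of the degenerate case $\ba^\prime\ba=\bb^\prime\bb$ via $\cos(2\theta)=0$ is correct. The only cosmetic remark is that your scaling modulus is denoted by the same symbol as the normal vector $\r$ of the proposition; a different letter would avoid confusion.
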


\section{Normal Vector on Modified Hopf Manifold}
The following proposition is stated in~\cite{Otten2016b}, but only a sketch of the proof was given there. It is the purpose of the present document to state a complete proof.
\begin{proposition}[normal vectors modified Hopf]
If  $(\xCrit,\alphCrit,\omega,\ba,\bb)$ is a regular solution to \eqref{eq:ExpStabManifold} for an arbitrary but fixed $\sigma$, then $\r$ that obeys the following equations is normal to the manifold of modified Hopf points at this solution:
\begin{subequations}\label{eq:ExpStabNV}
\begin{align}
\text{equations \eqref{eq:ExpStabManifold}}\label{eq:ExpStabNV1}\\
\begin{bmatrix}
\nabla_{\xCrit}\f^\prime&\B_{12}&\B_{13}&0&0\\
0&\B_{22}&\B_{23}&2\ba&\bb\\
0&\B_{32}&\B_{33}&2\bb&\ba\\
0&\B_{42}&\B_{43}&0&0\\
\end{bmatrix}\bkappa&=0\label{eq:ExpStabNV2}\\
\begin{bmatrix}
\nabla_{\alphCrit}\f^\prime&\B_{52}&\B_{53}&0&0
\end{bmatrix}\bkappa-\r&=0\label{eq:ExpStabNV3}\\
\r^\prime \r-1 &=0\label{eq:NVNorming}
\end{align}
\end{subequations}
where
\begin{align*}
\B_{12}=&\sum_{i=0}^m \sigma (\nabla_{\xCrit}\tau_i)\big(\expcos(\sigma,\omega,\tau_i)\ba^\prime+\expsin(\sigma,\omega,\tau_i)\bb^\prime\big)A_i^\prime\\
&-\sum_{i=0}^m \omega(\nabla_{\xCrit}\tau_i)\big(\expcos(\sigma,\omega,\tau_i)\bb^\prime-\expsin(\sigma,\omega,\tau_i)\ba^\prime\big)A_i^\prime\\
&-\sum_{i=0}^m \expcos(\sigma,\omega,\tau_i)(\nabla_{\xCrit}\ba^\prime A_i^\prime)+\expsin(\sigma,\omega,\tau_i)(\nabla_{\xCrit}\bb^\prime A_i^\prime)\,,\\
\B_{13}=&\sum_{i=0}^m  \sigma(\nabla_{\xCrit}\tau_i)\big(\expcos(\sigma,\omega,\tau_i)\bb^\prime-\expsin(\sigma,\omega,\tau_i)\ba^\prime\big)A_i^\prime\\
&+\sum_{i=0}^m \omega(\nabla_{\xCrit}\tau_i) \big(\expsin(\sigma,\omega,\tau_i)\bb^\prime+\expcos(\sigma,\omega,\tau_i)\ba^\prime)A_i^\prime\\
&-\sum_{i=0}^m \expcos(\sigma,\omega,\tau_i)(\nabla_{\xCrit}\bb^\prime A_i^\prime)-\expsin(\sigma,\omega,\tau_i)(\nabla_{\xCrit}\ba^\prime A_i^\prime)\,,
\end{align*}
\begin{align*}
\B_{22}=&\sigma I-\sum_{i=0}^m \expcos(\sigma,\omega,\tau_i)A_i^\prime\,,\\
\B_{23}=&\omega I+\sum_{i=0}^m \expsin(\sigma,\omega,\tau_i)A_i^\prime\,,
\end{align*}
\begin{align*}
\B_{32}=&-\omega I-\sum_{i=0}^m \expsin(\sigma,\omega,\tau_i)A_i^\prime\,,\\
\B_{33}=&\sigma I-\sum_{i=0}^m \expcos(\sigma,\omega,\tau_i)A_i^\prime\,,
\end{align*}
\begin{align*}
\B_{42}=&-\bb^\prime+\sum_{i=0}^m\tau_i\big(\expsin(\sigma,\omega,\tau_i)\ba^\prime-\expcos(\sigma,\omega,\tau_i)\bb^\prime)\big)A_i^\prime\,,\\
\B_{43}=&\ba^\prime+\sum_{i=0}^m \tau_i\big(\expsin(\sigma,\omega,\tau_i)\bb^\prime+\expcos(\sigma,\omega,\tau_i)\ba^\prime)\big)A_i^\prime\,,
\end{align*}
\begin{align*}
\B_{52}=&\sum_{i=0}^m \sigma(\nabla_{\alphCrit}\tau_i)\big(\expcos(\sigma,\omega,\tau_i)\ba^\prime+\expsin(\sigma,\omega,\tau_i)\bb^\prime\big)A_i^\prime\\
&+\sum_{i=0}^m \omega(\nabla_{\alphCrit}\tau_i)\big(\expsin(\sigma,\omega,\tau_i)\ba^\prime-\expcos(\sigma,\omega,\tau_i)\bb^\prime\big)A_i^\prime\\
&-\sum_{i=0}^m \expcos(\sigma,\omega,\tau_i)(\nabla_{\alphCrit}a^\prime A_i^\prime)+\expsin(\sigma,\omega,\tau_i)(\nabla_{\alphCrit}b^\prime A_i^\prime)\,,\\
\B_{53}=&\sum_{i=0}^m \sigma(\nabla_{\alphCrit}\tau_i)\big(\expcos(\sigma,\omega,\tau_i)\bb^\prime-\expsin(\sigma,\omega,\tau_i)\ba^\prime\big)A_i^\prime\\
&+\sum_{i=0}^m \omega(\nabla_{\alphCrit}\tau_i)\big(\expsin(\sigma,\omega,\tau_i)\bb^\prime+\expcos(\sigma,\omega,\tau_i)\ba^\prime\big)A_i^\prime\\
&-\sum_{i=0}^m \expcos(\sigma,\omega,\tau_i)(\nabla_{\alphCrit}\bb^\prime A_i^\prime)-\expsin(\sigma,\omega,\tau_i)(\nabla_{\alphCrit}\ba^\prime A_i^\prime)\,.
\end{align*}
The expressions $\nabla_{\xCrit}\ba^\prime A_i^\prime$ are given by
\begin{equation*}
(\nabla_{\xCrit}\ba^\prime A_i^\prime)_{\mu,\nu}=\sum_{\rho=1}^n \ba_\rho\frac{\partial^2 f_\nu}{\partial \xE^{(c)}_\mu\,\partial \xCrit_\rho(t-\tau_i)}\,.
\end{equation*}
The expressions $\nabla_{\xCrit}\bb^\prime A_i^\prime$,  $\nabla_{\alphCrit}\ba^\prime A_i^\prime$ and  $\nabla_{\alphCrit}\bb^\prime A_i^\prime$ are defined accordingly.
\end{proposition}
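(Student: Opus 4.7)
The plan is a Lagrange-multiplier (Fredholm-alternative) argument applied to the augmented system \eqref{eq:ExpStabManifold}. Let $\g(\xCrit,\alphCrit,\omega,\ba,\bb)$ denote the stacked left-hand sides of \eqref{eq:ExpStabManifoldSteady}--\eqref{eq:ExpStabManifoldPhase}. The regularity hypothesis says $\nabla\g$ has full row rank at the given solution, so the solution set is locally a smooth manifold $\hat{\mathcal{M}}$ in the joint space of $(\xCrit,\alphCrit,\omega,\ba,\bb)$, and the modified Hopf manifold in parameter space is its projection $\mathcal{M}_\alpha$ onto $\alphCrit$. A vector $\r\in\R^{n_\alpha}$ is normal to $\mathcal{M}_\alpha$ iff, embedded as $(0,\r,0,0,0)^\prime$ in the joint space, it is orthogonal to $T\hat{\mathcal{M}}=\ker(\nabla\g)$; equivalently, iff it lies in $\mathrm{range}(\nabla\g^\prime)$. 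Hence there must exist a multiplier $\bkappa=(\bkappa_1,\ldots,\bkappa_5)$, partitioned according to the five blocks of \eqref{eq:ExpStabManifold}, with $\nabla\g^\prime\bkappa=(0,\r,0,0,0)^\prime$. Equation \eqref{eq:ExpStabNV1} enforces that the base point lies on $\hat{\mathcal{M}}$, and \eqref{eq:NVNorming} merely fixes the scale of $\r$.

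\textbf{Reading off the block system.} Splitting the Lagrange identity into its five variable blocks yields exactly the four lines of \eqref{eq:ExpStabNV2} (for variables $\xCrit,\ba,\bb,\omega$, with zero right-hand sides) together with \eqref{eq:ExpStabNV3} (for variable $\alphCrit$, with right-hand side $\r$). The zeros in column 1 of rows 2--4 reflect the absence of $\ba,\bb,\omega$ in \eqref{eq:ExpStabManifoldSteady}, while the zeros in columns 4--5 of row 1 reflect the absence of $\xCrit,\alphCrit$ in \eqref{eq:ExpStabManifoldLength}--\eqref{eq:ExpStabManifoldPhase}. The entries $2\ba,\bb$ and $2\bb,\ba$ in rows 2--3 are simply $\nabla_\ba$ and $\nabla_\bb$ applied to \eqref{eq:ExpStabManifoldLength} and \eqref{eq:ExpStabManifoldPhase}.

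\textbf{Identifying the $\B_{ij}$ blocks.} The substantive step is verifying that each $\B_{ij}$ equals the transposed partial-derivative block of $\g$ in the appropriate slot. The simple blocks $\B_{22},\B_{23},\B_{32},\B_{33}$ come from differentiating \eqref{eq:ExpStabManifoldReal}--\eqref{eq:ExpStabManifoldImag} in $\ba,\bb$: only $\sigma I$, $\pm\omega I$, and the $\expcos,\expsin$ factors survive, followed by transposition. The $\omega$-blocks $\B_{42},\B_{43}$ use $\partial_\omega\expcos(\sigma,\omega,\tau_i)=-\tau_i\expsin(\sigma,\omega,\tau_i)$ and $\partial_\omega\expsin(\sigma,\omega,\tau_i)=\tau_i\expcos(\sigma,\omega,\tau_i)$, combined with the $-\omega\bb,+\omega\ba$ terms. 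The longer blocks $\B_{12},\B_{13},\B_{52},\B_{53}$ involve two sources of $\xCrit$- or $\alphCrit$-dependence in \eqref{eq:ExpStabManifoldReal}--\eqref{eq:ExpStabManifoldImag}: (i) the delays $\tau_i(\xCrit,\alphCrit)$ inside $\expcos,\expsin$, producing the $\sigma(\nabla\tau_i)$ and $\omega(\nabla\tau_i)$ contributions via $\partial_{\tau_i}\expcos=-\sigma\expcos-\omega\expsin$ and $\partial_{\tau_i}\expsin=-\sigma\expsin+\omega\expcos$; and (ii) the Jacobians $A_i(\xCrit,\alphCrit)$, whose derivatives contracted with $\ba$ or $\bb$ give exactly the second-derivative tensors named by the index formula stated in the proposition. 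Product rule and transposition then assemble the displayed expressions.

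\textbf{Main obstacle.} No conceptual difficulty arises; the hard part is bookkeeping. Care is needed to (i) keep the two sources of $\xCrit$- and $\alphCrit$-dependence separate (delays vs.\ Jacobians), (ii) transpose outer products such as $A_i\ba(\nabla_{\xCrit}\tau_i)^\prime$ so that row vectors like $\ba^\prime A_i^\prime$ end up adjacent to the gradient $\nabla_{\xCrit}\tau_i$, and (iii) track signs arising from the factor $-\sigma$ in $\partial_{\tau_i}\expcos,\partial_{\tau_i}\expsin$ combined with the leading minus in \eqref{eq:ExpStabManifoldReal}--\eqref{eq:ExpStabManifoldImag}. Once $\B_{12}$ is verified in detail, $\B_{13},\B_{52},\B_{53}$ follow by the obvious symmetries (real $\leftrightarrow$ imaginary part and $\xCrit\leftrightarrow\alphCrit$).
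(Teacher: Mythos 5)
Your proposal is correct and follows essentially the same route as the paper: the paper likewise builds the transposed Jacobian $\B$ of \eqref{eq:ExpStabManifold} with respect to $(\xCrit,\ba,\bb,\omega,\alphCrit)$, characterizes parameter-space normal vectors by requiring the first $3n+1$ row blocks of $\B\bkappa$ to vanish (your multiplier/Fredholm condition $\nabla\g^\prime\bkappa=(0,\dots,0,\r)^\prime$), reads off $\r$ from the $\alphCrit$ block, and obtains the $\B_{ij}$ by exactly the product-rule and transposition bookkeeping you describe. The only difference is presentational (Lagrange-multiplier language versus ``kernel of the first $3n+1$ rows of the transposed Jacobian''), which is the same computation.
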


\begin{proof}
Consider \eqref{eq:ExpStabManifold} as $3n+2$ equations in the $3n+n_\alpha+1$ variables $\xCrit$, $\alphCrit$, $\omega$, $\ba$ and $\bb$. In the neighborhood of a regular solution $(\xCrit,\alphCrit,\omega,\ba,\bb) \in \mathbb{R}^n\times\mathbb{R}^{n_\alpha}\times\mathbb{R}\times\mathbb{R}^n\times\mathbb{R}^n$, these equations locally define an $(n_\alpha-1)$-dimensional manifold of modified Hopf points.

The rows of the Jacobian of \eqref{eq:ExpStabManifold} w.r.t. $\xCrit$, $\bb$, $\ba$, $\omega$ and $\alphCrit$ at a regular solution span the normal space to the manifold (cf. \cite{Monnigmann2002}). It is more convenient to work with the transposed Jacobian in the sequel than with the Jacobian itself. The transposed Jacobian is denoted by $\B$ and reads
\begin{align*}
\B=&\begin{bmatrix}
\B_{11}&\B_{12}&\B_{13}&\B_{14}&\B_{15}\\
\B_{21}&\B_{22}&\B_{23}&\B_{24}&\B_{25}\\
\B_{31}&\B_{32}&\B_{33}&\B_{34}&\B_{35}\\
\B_{41}&\B_{42}&\B_{43}&\B_{44}&\B_{45}\\
\B_{51}&\B_{52}&\B_{53}&\B_{54}&\B_{55}\\
\end{bmatrix}\nonumber\\
=&\begin{bmatrix}
\nabla_{\xCrit}\\
\nabla_{\ba}\\
\nabla_{\bb}\\
\nabla_{\omega}\\
\nabla_{\alphCrit}\\
\end{bmatrix}
\begin{bmatrix}
\f(\xCrit,\xCrit,...,\xCrit,\alphCrit)\\
\sigma\ba-\omega\bb-\sum_{i=0}^m A_i\big(\expcos(\sigma,\omega,\tau_i)\ba+\expsin(\sigma,\omega,\tau_i)\bb\big)\\
\omega\ba+\sigma\bb-\sum_{i=0}^m A_i\big(\expcos(\sigma,\omega,\tau_i)\bb-\expsin(\sigma,\omega,\tau_i)\ba\big)\\
\ba^\prime\ba+\bb^\prime\bb -1\\
\ba^\prime\bb
\end{bmatrix}^\prime\,.
\end{align*} 
The first, fourth and fifth column of $B$ contain the Jacobians
of  $\f^\prime(\xCrit,\xCrit,...,\xCrit,\alphCrit)$, $\ba^\prime\ba+\bb^\prime\bb -1$ and  $\ba^\prime\bb$, respectively. They result from simple calculations which are not detailed here. 

The calculations required to determine $B_{22}$, $B_{23}$, $B_{32}$ and $B_{33}$, which correspond to the 
Jacobians of $\sigma\ba^\prime-\omega\bb^\prime-\sum_{i=0}^m \big(\expcos(\sigma,\omega,\tau_i)\ba^\prime-\expsin(\sigma,\omega,\tau_i)\bb^\prime\big)A_i^\prime$ and
$\omega\ba^\prime+\sigma\bb^\prime-\sum_{i=0}^m\big(\expcos(\sigma,\omega,\tau_i)\bb^\prime+\expsin(\sigma,\omega,\tau_i)\ba^\prime\big) A_i^\prime$  with respect to $\ba$, $\bb$, and $\omega$,
are also simple. They result in the following expressions:
\begin{align*}
\B_{22}=&\nabla_{\ba}\left(\sigma\ba^\prime-\omega\bb^\prime-\sum_{i=0}^m \big(\expcos(\sigma,\omega,\tau_i)\ba^\prime-\expsin(\sigma,\omega,\tau_i)\bb^\prime\big)A_i^\prime\right)\\
=&\sigma I-\sum_{i=0}^m \expcos(\sigma,\omega,\tau_i)A_i^\prime\\
\B_{23}=&\nabla_{\ba}\left(\omega\ba^\prime+\sigma\bb^\prime-\sum_{i=0}^m\big(\expcos(\sigma,\omega,\tau_i)\bb^\prime+\expsin(\sigma,\omega,\tau_i)\ba^\prime\big) A_i^\prime\right)\\
=&\omega I+\sum_{i=0}^m \expsin(\sigma,\omega,\tau_i)A_i^\prime\,,
\end{align*}
\begin{align*}
\B_{32}=&\nabla_{\bb}\left(\sigma\ba^\prime-\omega\bb^\prime-\sum_{i=0}^m \big(\expcos(\sigma,\omega,\tau_i)\ba^\prime-\expsin(\sigma,\omega,\tau_i)\bb^\prime\big)A_i^\prime\right)\\
=&-\omega I-\sum_{i=0}^m \expsin(\sigma,\omega,\tau_i)A_i^\prime\\
\B_{33}=&\nabla_{\bb}\left(\omega\ba^\prime+\sigma\bb^\prime-\sum_{i=0}^m\big(\expcos(\sigma,\omega,\tau_i)\bb^\prime+\expsin(\sigma,\omega,\tau_i)\ba^\prime\big) A_i^\prime\right)\\
=&\sigma I-\sum_{i=0}^m \expcos(\sigma,\omega,\tau_i)A_i^\prime\,,
\end{align*}
\begin{align*}
\B_{42}=&\nabla_{\omega}\left(\sigma\ba^\prime-\omega\bb^\prime-\sum_{i=0}^m \big(\expcos(\sigma,\omega,\tau_i)\ba^\prime-\expsin(\sigma,\omega,\tau_i)\bb^\prime\big)A_i^\prime\right)\\
=&-\bb^\prime+\sum_{i=0}^m\tau_i\big(\expsin(\sigma,\omega,\tau_i)\ba^\prime-\expcos(\sigma,\omega,\tau_i)\bb^\prime)\big)A_i^\prime\\
\B_{43}=&\nabla_{\omega}\left(\omega\ba^\prime+\sigma\bb^\prime-\sum_{i=0}^m\big(\expcos(\sigma,\omega,\tau_i)\bb^\prime+\expsin(\sigma,\omega,\tau_i)\ba^\prime\big) A_i^\prime\right)\\
=&\ba^\prime+\sum_{i=0}^m \tau_i\big(\expsin(\sigma,\omega,\tau_i)\bb^\prime+\expcos(\sigma,\omega,\tau_i)\ba^\prime)\big)A_i^\prime\,,
\end{align*}

The Jacobians w.r.t $\xCrit$ and $\alphCrit$ call for more attention. Recall that $\tau_i$ is, as described in \eqref{eq:delay}, a function of $\xCrit$ and $\alphCrit$. In this case, the product rule has to be applied to 
three expression depending on $\xCrit$, the exponential function, a trigonometric function and $A_i^\prime$.

\begin{align*}
\B_{12}=&\nabla_{\xCrit} \left(\sigma\ba^\prime-\omega\bb^\prime-\sum_{i=0}^m \big(\expcos(\sigma,\omega,\tau_i)\ba^\prime-\expsin(\sigma,\omega,\tau_i)\bb^\prime\big)A_i^\prime \right)\\
=&\sum_{i=0}^m \sigma(\nabla_{\xCrit}\tau_i)\big(\expcos(\sigma,\omega,\tau_i)\ba^\prime+\expsin(\sigma,\omega,\tau_i)\bb^\prime\big)A_i^\prime\\
&-\sum_{i=0}^m \omega(\nabla_{\xCrit}\tau_i)\big(\expcos(\sigma,\omega,\tau_i)\bb^\prime-\expsin(\sigma,\omega,\tau_i)\ba^\prime\big)A_i^\prime\\
&-\sum_{i=0}^m \expcos(\sigma,\omega,\tau_i)(\nabla_{\xCrit}\ba^\prime A_i^\prime)+\expsin(\sigma,\omega,\tau_i)(\nabla_{\xCrit}\bb^\prime A_i^\prime))\\
\B_{13}=&\nabla_{\xCrit} \left(\omega\ba^\prime+\sigma\bb^\prime-\sum_{i=0}^m\big(\expcos(\sigma,\omega,\tau_i)\bb^\prime+\expsin(\sigma,\omega,\tau_i)\ba^\prime\big) A_i^\prime\right)\\
=&\sum_{i=0}^m  \sigma(\nabla_{\xCrit}\tau_i)\big(\expcos(\sigma,\omega,\tau_i)\bb^\prime-\expsin(\sigma,\omega,\tau_i)\ba^\prime\big)A_i^\prime\\
&+\sum_{i=0}^m \omega(\nabla_{\xCrit}\tau_i) \big(\expsin(\sigma,\omega,\tau_i)\bb^\prime+\expcos(\sigma,\omega,\tau_i)\ba^\prime)A_i^\prime\\
&-\sum_{i=0}^m \expcos(\sigma,\omega,\tau_i)(\nabla_{\xCrit}\bb^\prime A_i^\prime)-\expsin(\sigma,\omega,\tau_i)(\nabla_{\xCrit}\ba^\prime A_i^\prime))\,.
\end{align*}

The product rule also has to be applied when calculating the Jacobians w.r.t. $\alphCrit$:

\begin{align*}
\B_{52}=&\nabla_{\alphCrit} \left(\sigma\ba^\prime-\omega\bb^\prime-\sum_{i=0}^m \big(\expcos(\sigma,\omega,\tau_i)\ba^\prime-\expsin(\sigma,\omega,\tau_i)\bb^\prime\big)A_i^\prime \right)\\
=&\sum_{i=0}^m \sigma(\nabla_{\alphCrit}\tau_i)\big(\expcos(\sigma,\omega,\tau_i)\ba^\prime+\expsin(\sigma,\omega,\tau_i)\bb^\prime\big)A_i^\prime\\
&+\sum_{i=0}^m \omega(\nabla_{\alphCrit}\tau_i)\big(\expsin(\sigma,\omega,\tau_i)\ba^\prime-\expcos(\sigma,\omega,\tau_i)\bb^\prime\big)A_i^\prime\\
&-\sum_{i=0}^m \expcos(\sigma,\omega,\tau_i)(\nabla_{\alphCrit}a^\prime A_i^\prime)+\expsin(\sigma,\omega,\tau_i)(\nabla_{\alphCrit}b^\prime A_i^\prime)\\
\B_{53}=&\nabla_{\alphCrit} \left(\omega\ba^\prime+\sigma\bb^\prime-\sum_{i=0}^m\big(\expcos(\sigma,\omega,\tau_i)\bb^\prime+\expsin(\sigma,\omega,\tau_i)\ba^\prime\big) A_i^\prime\right)\\
=&\sum_{i=0}^m \sigma(\nabla_{\alphCrit}\tau_i)\big(\expcos(\sigma,\omega,\tau_i)\bb^\prime-\expsin(\sigma,\omega,\tau_i)\ba^\prime\big)A_i^\prime\\
&+\sum_{i=0}^m \omega(\nabla_{\alphCrit}\tau_i)\big(\expsin(\sigma,\omega,\tau_i)\bb^\prime+\expcos(\sigma,\omega,\tau_i)\ba^\prime\big)A_i^\prime\\
&-\sum_{i=0}^m \expcos(\sigma,\omega,\tau_i)(\nabla_{\alphCrit}\bb^\prime A_i^\prime)-\expsin(\sigma,\omega,\tau_i)(\nabla_{\alphCrit}\ba^\prime A_i^\prime)\\
\end{align*}

It remains to state expressions for the
vector matrix products such as $\nabla_{\xCrit}\ba^\prime A_i^\prime$. In order to do so we have to switch to components. 
The matrix $A_i$ contains 
\begin{equation*}
(A_i)_{\rho,\nu}=\frac{\partial f_\rho}{\partial \xCrit_\nu(t-\tau_i)}\,
\end{equation*}
in its $\rho$-th row and $\nu$-th column. 
Multiplying it with a vector $\ba$ from the right results in $A_i a$ with components
\begin{equation*}
(A_i\ba)_\nu = \sum_{\rho=1}^n \ba_\rho \frac{\partial f_\nu}{\partial \xCrit_\rho(t-\tau_i)}\,.
\end{equation*}
Transposing and calculating the required derivative yields the matrix $\nabla_{\xCrit}\ba^\prime A_i^\prime$ with components
\begin{equation*}
(\nabla_{\xCrit}\ba^\prime A_i^\prime)_{\mu,\nu}=\sum_{\rho=1}^n \ba_\rho\frac{\partial^2 f_\nu}{\partial \xE^{(c)}_\mu\,\partial \xCrit_\rho(t-\tau_i)}\,
\end{equation*}
in its $\mu$-th row and $\nu$-th column. 
The other derivatives of vector matrix products can be found accordingly.

The matrix whose columns span the normal vector space is now completely determined. We are looking for the parameter space normal vector space, thus we have to find the kernel $\bkappa$ of the first $3n+1$ rows of the transposed Jacobian,
\begin{equation*}
\begin{bmatrix}
\nabla_{\xCrit}\f^\prime&\B_{12}&\B_{13}&0&0\\
0&\B_{22}&\B_{23}&2\ba&\bb\\
0&\B_{32}&\B_{33}&2\bb&\ba\\
0&\B_{42}&\B_{43}&0&0\\
\end{bmatrix}\bkappa=0
\end{equation*}
 which leads to \eqref{eq:ExpStabNV2}. By multiplying $\bkappa$ with the last $n_\alpha$ rows of $\B$ we get the parameter space component of the normal space,
\begin{equation*}
\r=\begin{bmatrix}
\nabla_{\alphCrit}\f^\prime&\B_{52}&\B_{53}&0&0
\end{bmatrix}\bkappa\,,
\end{equation*}
which yields \eqref{eq:ExpStabNV3}.
The length of $\r$ in not determined yet, \eqref{eq:NVNorming} fixes the length of $\r$ to unit length.
\end{proof}

\bibliographystyle{IEEEtran}
\bibliography{DDESupplyChainNV}   
\end{document}